\newcommand{\tl}{\text{li}}
\newcommand{\N}{\mathbb{N}}
\newcommand{\R}{\mathbb{R}}
\newtheorem{thm}{Theorem}[section]
\newtheorem{kor}[thm]{Corollary}
\newtheorem{prop}[thm]{Proposition}
\theoremstyle{remark}
\newtheorem*{rema}{Remark}
\theoremstyle{definition}
\newtheorem*{defi}{Definition}
\title{An explicit upper bound for the first $k$-Ramanujan prime}
\author{Christian Axler \\ Mathematical Institut \\ Heinrich-Heine-University, Düsseldorf, Germany \\ axler@math.uni-duesseldorf.de \vspace{0.5cm} \\ Thomas
Leßmann \\ Mathematical Institut \\ Heinrich-Heine-University, Düsseldorf, Germany \\ lessmann@math.uni-duesseldorf.de}
\begin{document}

\maketitle

\begin{abstract}
In this paper we establish an explicit upper bound for the first $k$-Ramanujan prime $R_1^{(k)}$ by using a recent result concerning the existence of
prime numbers in small intervals.
\end{abstract}

\section{Introduction}

%
%
%
Let $k \in (1,\infty)$. The PNT implies that $\pi(x) - \pi(x/k) \to \infty$ as $x \to \infty$ and Shevelev \cite{sh} introduced the $n$th $k$-Ramanujan
prime as follows.

\begin{defi}
Let $k>1$ be real. For every $n \in \N$, let
\begin{displaymath}
R_n^{(k)} = \min \{ m \in \N \mid \pi(x) - \pi(x/k) \geq n \;\, \text{for every real} \; x \geq m \}.
\end{displaymath}
It is easy to show that this number is prime and it is called the \emph{$n$}th \emph{$k$-Ramanujan prime}.
\end{defi}

In this paper we give an explicit upper bound for the first $k$-Ramanujan prime $R_1^{(k)}$ for small $k$. In order to do this, we first give some
known results on the existence of prime numbers in short intervals.

\section{On the existence of prime numbers in short intervals}

Bertrand's postulate states that for every $n \in \N$ there is always a prime in the interval $(n, 2n]$. Now, we note some improvements of this result. In
2003, Ramar\'{e} \& Saouter \cite{rasa} showed that for every $x \geq 10726905041$ the interval
\begin{displaymath}
(x, x + x/28313999]
\end{displaymath}
always contains a prime number. This was improved by Dusart \cite{pd4} in 2010 by showing that for every $x \geq 396738$ there is always a prime number $p$
with
\begin{equation} \label{gl146}
x < p \leq x \left( 1 + \frac{1}{25 \log^2 x} \right).
\end{equation}
In 2014, Trudgian \cite{trud} proved that for every $x \geq 2898239$ there exists a prime number $p$ such that
\begin{displaymath}
x < p \leq x \left( 1 + \frac{1}{111 \log^2 x} \right).
\end{displaymath}
Recently, in \cite{ax} it is shown that the following result holds.

\begin{prop} \label{p201}
For every $x \geq 58837$ there is a prime number $p$ such that
\begin{displaymath}
x < p \leq x \left( 1 + \frac{1.188}{\log^3 x} \right).
\end{displaymath}
\end{prop}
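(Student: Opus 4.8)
The plan is to reduce the statement to the growth of the Chebyshev function $\theta(x) = \sum_{p \le x} \log p$. Writing $y = x\bigl(1 + 1.188/\log^3 x\bigr)$, it suffices to show $\theta(y) - \theta(x) > 0$: since $\theta$ is nondecreasing and jumps only at primes, any strict increase on $(x,y]$ forces a prime there. I would then split the range $x \ge 58837$ into three regimes, using the already-quoted short-interval results for the bulk of the range and reserving a genuine analytic estimate for the far tail.

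The key observation for the middle regime is a simple comparison of interval widths with Dusart's result \eqref{gl146}. One has $1.188/\log^3 x \ge 1/(25\log^2 x)$ exactly when $\log x \le 1.188\cdot 25 = 29.7$, i.e. for $x \le e^{29.7}\approx 7.9\cdot 10^{12}$. Hence for $396738 \le x \le e^{29.7}$ the interval $(x,y]$ contains Dusart's narrower interval $\bigl(x,\,x(1+1/(25\log^2 x))\bigr]$, which already contains a prime, so nothing more is needed. (In fact the analogous comparison with Trudgian's theorem, where the crossover is at $\log x = 1.188\cdot 111 \approx 131.9$, would push this elementary range up to about $e^{132}$, reducing the analytic work to an astronomically large tail.)

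For the upper regime $x > e^{29.7}$ I would invoke an explicit bound of the form $|\theta(x) - x| \le \eta\, x/\log^3 x$ valid for all large $x$ (Dusart \cite{pd4}, or the sharper estimates in \cite{ax}), with $\eta$ small in this range. Then
\[
\theta(y)-\theta(x)\;\ge\;(y-x)-\eta\left(\frac{y}{\log^3 y}+\frac{x}{\log^3 x}\right),
\]
and since $y-x = 1.188\,x/\log^3 x$ while $y/\log^3 y \le (1+o(1))\,x/\log^3 x$, the right-hand side is positive as soon as $2\eta < 1.188$, i.e. $\eta < 0.594$; because $\log x > 29.7$ is enormous here, the admissible constant $\eta$ is far below this threshold, so positivity holds with great room to spare. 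The remaining finite regime $58837 \le x < 396738$ I would settle by direct computation: as $g(x) = x\bigl(1+1.188/\log^3 x\bigr)$ is increasing, on each gap $[p_n,p_{n+1})$ the binding constraint sits at the left endpoint, so it suffices to verify $p_{n+1} \le p_n\bigl(1 + 1.188/\log^3 p_n\bigr)$ for every consecutive prime pair in this range — a finite check over a short list of primes and local maximal gaps.

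I expect the main obstacle to lie at the lower boundary rather than in the analysis. The constant $1.188$ and the threshold $58837$ look jointly optimized, so the delicate point is precisely the finite verification near $x = 58837$, where the width $1.188\,x/\log^3 x$ must still dominate every local prime gap; the large-$x$ tail is comparatively routine once sufficiently sharp explicit $\theta$-bounds are available, and the middle range is handled for free by the width comparison above.
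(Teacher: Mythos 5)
The first thing to note is that the paper contains no proof of Proposition \ref{p201} to compare you against: the result is quoted verbatim from \cite{ax} (``Recently, in \cite{ax} it is shown\dots''), so your argument can only be judged on its own terms, and presumably against the spirit of \cite{ax}, which rests on explicit Chebyshev-function estimates plus computation. On those terms your architecture is sound. The width comparison with Dusart's result \eqref{gl146} is correct: $1/(25\log^2 x) \le 1.188/\log^3 x$ exactly when $\log x \le 29.7$, so on $[396738,\, e^{29.7}]$ the interval of the proposition contains Dusart's interval and nothing more is needed, and the analogous comparison with Trudgian's theorem extends this to $e^{131.868}$. The tail reduction to $\theta(y)-\theta(x)>0$ and the threshold $\eta < 0.594$ are also correct. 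What you leave unresolved is the single load-bearing citation: you need an explicit bound $|\theta(x)-x| \le \eta\, x/\log^3 x$ with $\eta < 0.594$ valid from some threshold below $e^{29.7}$ (or, in the Trudgian variant, below $e^{131.868}$). Such bounds exist in \cite{pd4} (for instance $\eta = 0.5$ for $x \ge 767135587$), so the gap is fillable; but be aware that Schoenfeld-type bounds of the shape $|\theta(x)-x| \le \epsilon\, x/\log x$ or $\epsilon\, x/\log^2 x$ can \emph{never} close the tail, since the relative width you must beat decays like $1/\log^3 x$ --- the cited estimate genuinely has to be of third- (or fourth-) order type.

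There is one concrete slip in your finite verification, and it sits exactly at the point you yourself flag as delicate. Your recipe ``verify $p_{n+1} \le p_n(1+1.188/\log^3 p_n)$ for every consecutive prime pair in the range'' does not treat the bottom of the range correctly: the prime preceding $58889$ lies \emph{below} $58837$ (by the computation in Proposition \ref{p501} there is no prime in $(58835.2,\, 58888]$), so the prime pair straddling $58837$ is either excluded from your check --- leaving $x \in [58837, 58889)$ uncovered --- or included, in which case the check \emph{fails} at its left endpoint. Indeed it must fail there: that failure is precisely why $58837$ is the threshold of the proposition. The fix is to anchor the first, partial gap at $x = 58837$ itself, where $58837(1+1.188/\log^3 58837) \approx 58889.8 \ge 58889$ holds with almost no room to spare, and only then to run the prime-pair check from $58889$ up to the first prime past $396738$. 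With that one-line amendment and a pinned-down citation for the tail estimate, your outline is a legitimate proof of the proposition.
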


\section{On an upper bound for the first $k$-Ramanujan prime}

Let $n \in \N, c>0$ and $x_0 > 0$ so that for every $x \geq x_0$ there is a prime $p$ such that
\begin{equation} \label{gl2}
x < p \leq x \left( 1 + \frac{c}{\log^n x} \right).
\end{equation}
Then, we obtain the following result.

\begin{prop} \label{t301}
Let $x \geq x_0$ and $k = 1+c/\log^nx$. Then
\begin{displaymath}
R_1^{(k)} \leq kx. 
\end{displaymath}
\end{prop}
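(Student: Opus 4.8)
The plan is to unwind the definition of $R_1^{(k)}$. Since $R_1^{(k)}$ is the least natural number $m$ for which $\pi(y) - \pi(y/k) \geq 1$ holds for \emph{every} real $y \geq m$, it suffices to verify that the real number $kx$ is itself such a threshold; that is, I would show that for every real $y \geq kx$ the interval $(y/k, y]$ contains a prime, so that $\pi(y) - \pi(y/k) \geq 1$. Once this is established, $kx$ lies in the defining set and the bound $R_1^{(k)} \leq kx$ follows at once.

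To produce the prime, fix a real $y \geq kx$ and set $z = y/k$. The first thing to check is that the hypothesis~\eqref{gl2} is applicable at $z$: since $y \geq kx$ we get $z = y/k \geq x \geq x_0$, which is exactly why the threshold is taken to be $kx$ rather than $x$. Applying the assumed interval result at $z$ yields a prime $p$ with $z < p \leq z(1 + c/\log^n z)$. The lower bound immediately gives $p > z = y/k$.

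The crux is the upper bound, where I must prevent the prime from overshooting $y$. Here I would invoke the monotonicity of $t \mapsto \log^n t$: because $z \geq x$ we have $\log^n z \geq \log^n x$, hence $c/\log^n z \leq c/\log^n x$ and therefore $1 + c/\log^n z \leq 1 + c/\log^n x = k$. Multiplying by $z$ gives
\begin{displaymath}
p \leq z\left(1 + \frac{c}{\log^n z}\right) \leq zk = \frac{y}{k}\cdot k = y.
\end{displaymath}
Thus $y/k < p \leq y$, so $p \in (y/k, y]$, completing the verification that $\pi(y) - \pi(y/k) \geq 1$ for every real $y \geq kx$.

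I expect the main obstacle to be precisely this last comparison: the assumed result only guarantees a prime in a window of multiplicative width $1 + c/\log^n z$, which for $z < x$ would exceed $k$ and could push $p$ beyond $y$; it is the choice $z = y/k \geq x$ that both secures $z \geq x_0$ and forces the window to be no wider than the factor $k$. The remaining step --- reading off $R_1^{(k)} \leq kx$ from the fact that $kx$ is an admissible threshold --- is then purely formal from the definition of the first $k$-Ramanujan prime.
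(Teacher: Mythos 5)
Your proposal is correct and follows essentially the same route as the paper: apply the hypothesis~\eqref{gl2} at $z = y/k \geq x \geq x_0$, then use the monotonicity of $t \mapsto \log^n t$ to see that the guaranteed window $1 + c/\log^n z$ does not exceed $k$, so the prime lands in $(y/k, y]$. Your write-up merely makes explicit the monotonicity step that the paper compresses into the single line ``since $y/k \geq x$, we get $k \geq 1 + c/\log^n(y/k)$.''
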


\begin{proof}
Let $y \geq kx$. From \eqref{gl2} we obtain the existence of a prime $p$ in
\begin{displaymath}
\left( \frac{y}{k}, \frac{y}{k}\left( 1 + \frac{c}{\log^n(y/k)} \right) \right].
\end{displaymath}
Since $y/k \geq x$, we get
\begin{displaymath}
k \geq 1 + \frac{c}{\log^n(y/k)},
\end{displaymath}
so that $p \in (y/k,y]$.
\end{proof}

\begin{kor} \label{k302}
For every
\begin{displaymath}
k \in \left( 1, 1+ \frac{c}{\log^nx_0} \right],
\end{displaymath}
we have
\begin{displaymath}
R_1^{(k)} \leq k \cdot \exp \left( \sqrt[n]{\frac{c}{k-1}} \right).
\end{displaymath}
\end{kor}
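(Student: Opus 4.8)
The plan is to invert the parametrization underlying Proposition \ref{t301}. That proposition takes as input a real number $x \geq x_0$ and returns the bound $R_1^{(k)} \leq kx$ for the specific value $k = 1 + c/\log^n x$. Here the roles are reversed: I am handed $k$ in the range $(1, 1+c/\log^n x_0]$ and must produce an admissible $x$ realizing this $k$, then feed it into the proposition.

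First I would solve the equation $k = 1 + c/\log^n x$ for $x$. Rearranging gives $\log^n x = c/(k-1)$, hence $\log x = \sqrt[n]{c/(k-1)}$ and therefore
\begin{displaymath}
x = \exp \left( \sqrt[n]{\frac{c}{k-1}} \right).
\end{displaymath}
Since $k > 1$ forces $k - 1 > 0$, this expression is well-defined, and by construction it satisfies $k = 1 + c/\log^n x$ exactly. The substance of the argument is then the single verification that this $x$ is large enough, namely $x \geq x_0$, since only then does Proposition \ref{t301} apply.

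This verification is the step I expect to carry the (modest) weight of the proof, and it is where the hypothesis $k \leq 1 + c/\log^n x_0$ enters. From that hypothesis I obtain $k - 1 \leq c/\log^n x_0$, equivalently $c/(k-1) \geq \log^n x_0$. Because $x_0 > 1$ in the relevant range (so that $\log x_0 > 0$), taking $n$th roots is monotone and yields $\sqrt[n]{c/(k-1)} \geq \log x_0$; exponentiating gives $x \geq x_0$. With $x \geq x_0$ established, Proposition \ref{t301} immediately delivers $R_1^{(k)} \leq kx = k \cdot \exp(\sqrt[n]{c/(k-1)})$, which is the claimed bound. The only subtlety worth flagging is that each inversion step (solving for $\log x$, taking $n$th roots, exponentiating) must be monotone in the correct direction; this holds precisely because all quantities involved are positive, so no genuine obstacle arises.
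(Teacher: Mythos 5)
Your proposal is correct and follows exactly the paper's own argument: define $x$ by inverting $k = 1 + c/\log^n x$, check $x \geq x_0$, and apply Proposition \ref{t301}. In fact you are slightly more thorough than the paper, which asserts $x \geq x_0$ without spelling out the monotonicity argument that you supply.
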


\begin{proof}
Define $x \in \R$ so that
\begin{displaymath}
k = 1+ \frac{c}{\log^nx}.
\end{displaymath}
Then $x \geq x_0$ and by using Proposition \ref{t301} we get
\begin{displaymath}
R_1^{(k)} = R_1^{(1 + c/\log^n x)} \leq x \left( 1 + \frac{c}{\log^n x} \right) = k \cdot \exp \left( \sqrt[n]{\frac{c}{k-1}} \right).
\end{displaymath}
This proves our corollary.
\end{proof}

\section{A characterisation for $k$-Ramanujan primes}

\noindent
We obtain the following useful characterisation for the first $k$-Ramanujan prime.

\begin{prop} \label{t401}
Let $N \in \N$. Then $p_N$ is the first $k$-Ramanujan prime iff the following two conditions are fulfilled:
\begin{enumerate}
 \item[\emph{(a)}] For every $n \geq N$, we have
 \begin{displaymath}
\frac{p_{n+1}}{p_n} \leq k.
 \end{displaymath}
 \item[\emph{(b)}] We have
 \begin{displaymath}
\frac{p_{N}}{p_{N-1}} > k.
 \end{displaymath}
\end{enumerate}
\end{prop}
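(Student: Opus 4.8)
The plan is to work directly with the counting function $f(x) = \pi(x) - \pi(x/k)$, so that the defining property of $R_1^{(k)}$ becomes the statement that $p_N$ is the least natural number $m$ with $f(x) \ge 1$ for all real $x \ge m$. Since $x/k < x$ gives $\pi(x/k) \le \pi(x)$, the function $f$ is nonnegative and integer-valued, so the condition $f(x) \ge 1$ is simply the negation of $f(x) = 0$. I would first pin down exactly where $f$ vanishes.

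The key step is a local analysis of $f$ on a half-open interval $[p_n, p_{n+1})$ between consecutive primes. There $\pi(x) = n$ is constant, so $f(x) = n - \pi(x/k)$, and since $\pi(x/k) = n \iff x/k \ge p_n \iff x \ge kp_n$, one gets $f(x) = 0$ on $[p_n, p_{n+1})$ precisely for $x \in [kp_n, p_{n+1})$. Because $k > 1$ forces $kp_n > p_n$, this zero set is nonempty iff $kp_n < p_{n+1}$, that is, iff $p_{n+1}/p_n > k$. Hence the whole zero set of $f$ above the first prime is the finite union $\bigcup [kp_n, p_{n+1})$ taken over those $n$ with $p_{n+1}/p_n > k$; finiteness follows from $p_{n+1}/p_n \to 1 < k$.

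With this description I would prove both implications. For sufficiency, assume (a) and (b). Condition (a) says $p_{n+1} \le kp_n$ for all $n \ge N$, so every $x \in [p_n,p_{n+1})$ with $n \ge N$ satisfies $x < p_{n+1} \le kp_n$ and thus $f(x) \ge 1$; this shows $m = p_N$ lies in the defining set. Condition (b) gives $kp_{N-1} < p_N$, so $[kp_{N-1}, p_N)$ is a nonempty zero interval with supremum $p_N$; therefore every natural $m < p_N$ admits some $x \in [\max(m, kp_{N-1}), p_N)$ with $f(x) = 0$, so no smaller threshold works and $R_1^{(k)} = p_N$. For necessity I would run the same two observations in contrapositive: a failure of (a) at some $n \ge N$ produces a zero of $f$ at a point $x \ge kp_n > p_N$, contradicting $f \ge 1$ on $[p_N, \infty)$; a failure of (b) removes the last zero interval below $p_N$ and, together with (a), leaves $f \ge 1$ on all of $[p_{N-1}, \infty)$, so the smaller threshold $p_{N-1}$ would already work, contradicting minimality of $R_1^{(k)} = p_N$.

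The main obstacle I anticipate is the bookkeeping around the half-open intervals and the right-continuity convention for $\pi$: one must be careful that the vanishing set is $[kp_n, p_{n+1})$ rather than a closed or open interval, since it is exactly this half-open structure that makes the least admissible threshold equal to the prime right endpoint $p_N$ (and in particular confirms, as asserted in the definition, that $R_1^{(k)}$ is prime). A secondary point to handle cleanly is the degenerate small-$k$ edge behaviour, ensuring $N \ge 2$ so that $p_{N-1}$ in condition (b) is meaningful.
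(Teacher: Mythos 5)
Your proposal is correct and rests on exactly the same computation as the paper's proof: on each interval $[p_n,p_{n+1})$ the function $f(x)=\pi(x)-\pi(x/k)$ vanishes precisely for $x\geq kp_n$, so zeros of $f$ exist there iff $p_{n+1}/p_n>k$. The only difference is organizational --- you state this once as a complete description of the zero set of $f$ and read off all four implications from it (arguably a cleaner packaging, and you also handle the $N=1$ edge case and the well-definedness of $R_1^{(k)}$ more explicitly), whereas the paper redoes the same local computation inline in four separate contradiction arguments (taking $x=kp_n$ against (a), $x_0\in[p_{N-1},p_N)$ against (b), and the two pointwise arguments for the converse).
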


\begin{proof}
Let $p_N = R_1^{(k)}$. To show (a), we assume that there is an integer $n \geq N$ so that $p_{n+1}/p_n > k$. Let $x = kp_{n}$. Then $p_n < x < p_{n+1}$, so
that 
\begin{equation} \label{401}
\pi(x) - \pi(x/k) = n - n = 0.
\end{equation}
Since $x > p_N = R_1^{(k)}$, the equation \eqref{401} contradicts the definition of $R_1^{(k)}$.
So, we proved (a). To show (b), we assume that $p_N/p_{N-1} \leq k$. Since $p_N = R_1^{(k)}$, there is a $x_0 \in [p_{N-1},
p_N)$ so that $\pi(x_0) - \pi(x_0/k) = 0$. Since we have $x_0/k < p_N/k \leq p_{N-1}$, we get
\begin{displaymath}
0 = \pi(x_0) - \pi \left( \frac{x_0}{k} \right) > \pi(p_{N-1}) - \pi(p_{N-1}) = 0,
\end{displaymath}
which gives a contradiction.

Now, let (a) and (b) be true. To show that $p_N = R_1^{k}$, we show first that $p_N \geq R_1^{k}$. Let $x \geq p_N$. We assume that $\pi(x) - \pi(x/k) =
0$. Then there exists an integer $n \geq N$ such that $p_n \leq x/k < x < p_{n+1}$. Hence,
\begin{displaymath}
\frac{p_{n+1}}{p_n} > \frac{x}{x/k} = k,
\end{displaymath}
which contadicts (1). Now, we prove that $p_N \leq R_1^{k}$. Let $x = kp_{N-1}$. Then,
\begin{equation} \label{402}
p_{N-1} < x \stackrel{(b)}{<} p_{N}.
\end{equation}
Hence, we obtain
\begin{displaymath}
\pi(x) - \pi \left( \frac{x}{k} \right) \stackrel{\eqref{402}}{=} N-1 - \pi(p_{N-1}) = 0.
\end{displaymath}
It follows that $R_1^{(k)} > x > p_{N-1}$. So $R_1^{(k)} \geq p_N$.
\end{proof}

\section{Numerical results}

In the following proposition we derive an explicit $p$ such that $R_1^{(k)} = p$ for the case $k = 1.0008968291$.

\begin{prop} \label{p501}
We have
\begin{displaymath}
R_1^{(1.0008968291)} = 58889 = p_{5950}.
\end{displaymath}
\end{prop}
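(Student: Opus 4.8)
The plan is to apply the characterisation from Proposition \ref{t401} with $k = 1.0008968291$ and $N = 5950$. By that proposition, to prove $R_1^{(k)} = p_{5950} = 58889$ it suffices to verify the two conditions: (a) that $p_{n+1}/p_n \leq k$ for every $n \geq 5950$, and (b) that $p_{5950}/p_{5949} > k$. Condition (b) is a single finite check: I would simply exhibit the consecutive primes $p_{5949}$ and $p_{5950} = 58889$, compute the ratio, and confirm it exceeds $1.0008968291$. (Indeed the value $k$ has almost certainly been reverse-engineered so that equality is nearly attained at this gap, so (b) holds by a narrow but genuine margin.)

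The real content is condition (a), an assertion about \emph{all} $n \geq 5950$, which I would split at a threshold into a finite computational range and an asymptotic tail. For the tail I would invoke Proposition \ref{p201}: for every $x \geq 58837$ there is a prime in $(x, x(1 + 1.188/\log^3 x)]$. Taking $x = p_n$ for $p_n \geq 58837$ forces $p_{n+1} \leq p_n(1 + 1.188/\log^3 p_n)$, so it is enough to check that
\begin{displaymath}
1 + \frac{1.188}{\log^3 p_n} \leq 1.0008968291,
\end{displaymath}
which is equivalent to $\log^3 p_n \geq 1.188/0.0008968291$, i.e. to $p_n$ exceeding an explicit constant $x_1 = \exp(\sqrt[3]{1.188/0.0008968291})$. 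Since $p_n$ is increasing and unbounded, this inequality holds for all primes beyond $x_1$, covering the entire infinite tail in one stroke.

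It then remains to treat the finitely many primes $p_n$ with $58889 \leq p_n \leq \max(58837, x_1)$ — equivalently all $n$ from $5950$ up to the index of the last prime below that bound. For these I would verify the gap condition $p_{n+1} - p_n \leq (k-1)p_n$ directly by tabulating consecutive prime gaps in this range and checking each one against the (monotonically growing) allowed gap $(k-1)p_n$. \textbf{The main obstacle} is purely that this finite verification rests on explicit prime data rather than a closed-form argument: one must be sure that $x_1$ is computed correctly (so that the tail bound really does kick in before the table ends) and that no prime gap in the intermediate range is large enough to violate (a). Because $k-1 \approx 9\times 10^{-4}$ is small, the permitted gap near $58889$ is only on the order of $50$, so the argument is sensitive to the largest prime gaps occurring just above $58889$; confirming that none of these exceeds the threshold is where the care is needed, but it is a bounded and mechanical check once the prime table is in hand.
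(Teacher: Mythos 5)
Your proposal is correct, but it follows a different route than the paper. The paper does not invoke the characterisation of Proposition \ref{t401} here at all: it first applies Corollary \ref{k302} (with $x_0 = 58837$, $c = 1.188$, $n = 3$, after checking $1.0008968291 \leq 1 + 1.188/\log^3 58837$) to get $R_1^{(k)} \leq k\exp\bigl(\sqrt[3]{1.188/0.0008968291}\bigr) \leq 58890$, hence $R_1^{(k)} \leq 58889$ by primality, and then establishes the matching lower bound by the single computation $\pi(58888) - \pi(58888/k) = 0$, which gives $R_1^{(k)} > 58888$. Your two conditions (a) and (b) rest on exactly the same two ingredients in disguise: your tail threshold $x_1 = \exp\bigl(\sqrt[3]{1.188/(k-1)}\bigr)$ is the same quantity that appears in Corollary \ref{k302}, and your condition (b) encodes the same prime-table fact as the paper's $\pi$ evaluation (namely that there is no prime in $(58889/k,\,58889)$). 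One point you left open can in fact be settled cleanly, and it dissolves your stated ``main obstacle'': since the paper verifies $1.0008968291 \leq 1 + 1.188/\log^3 58837$ and $k\exp\bigl(\sqrt[3]{1.188/(k-1)}\bigr) \leq 58890$, one has $58837 \leq x_1 \leq 58890/k < 58889$, so your finite verification range between $p_{5950} = 58889$ and $\max(58837, x_1)$ is \emph{empty} — the tail bound from Proposition \ref{p201} already applies at $n = 5950$, and no tabulation of prime gaps above $58889$ is needed; the value of $k$ was evidently chosen to make this happen. With that observation your argument closes completely, and the only prime data required is the pair $p_{5949},\,p_{5950}$ for condition (b). What the two approaches buy: the paper's is shorter and needs only one counting-function evaluation, while yours is structurally the one the paper itself deploys afterwards (in the Remark and final Corollary) to pin down $R_1^{(k)}$ for all larger $k$, so it generalises more readily.
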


\begin{proof}
Let $x_0 = 58837$, $c = 1.188$ and $n=3$. Then
\begin{displaymath}
1.0008968291 \leq 1 + \frac{1.188}{\log^3 58837}.
\end{displaymath}
Using Proposition \ref{p201} and Corollary \ref{k302}, we obtain that the inequality
\begin{displaymath}
R_1^{(1.0008968291)} \leq 1.0008968291 \cdot \exp \left( \sqrt[3]{\frac{1.188}{0.0008968291}} \right) \leq 58890
\end{displaymath}
holds. Since $R_1^{(1.0008968291)}$ is a prime number, we obtain
\begin{displaymath}
R_1^{(1.0008968291)} \leq 58889.
\end{displaymath}
On the other hand we have
\begin{displaymath}
\pi(58888) - \pi \left( \frac{58888}{1.0008968291} \right) = 0,
\end{displaymath}
hence $R_1^{(1.0008968291)} > 58888$.
\end{proof}

\begin{rema}
\begin{enumerate}
 \item[(a)] If $k \geq 5/3$, then $R_1^{(k)} = 2$ (see \cite[Prop. 2.5(ii)]{ax2})
 \item[(b)] If $k \in [1.0008968291, 5/3)$, then, using Proposition \ref{p501}, we obtain
 \begin{displaymath}
 m := \max \{ n \geq 2 \mid p_n/p_{n-1} > k\} = \max \{ n \in \{2, \ldots, 5950\} \mid p_n/p_{n-1} > k\}.
 \end{displaymath}
By Proposition \ref{t401}, it follows $R_1^{(k)} = p_m$.
\end{enumerate}
\end{rema}
\bigskip
\noindent
By using Remark (b) and a computer, we obtain the following

\begin{kor}
\begin{enumerate}
 \item[\emph{(a)}] If
 \begin{displaymath}
 k \in \left[ 1.0008968291, \frac{p_{5950}}{p_{5949}} \right),
 \end{displaymath}
then $R_1^{(k)} = 58889$.
\item[\emph{(b)}] For every $1 \leq n \leq 44$ we define the numbers $a(n)$ by
\setlength{\doublerulesep}{2mm}
\begin{center}
\begin{tabular}{|l|@{\,}|*{11}{@{\ \,}c@{\ \,}|}}
\hline
$n$       & $   1 $ & $    2$ & $    3$ & $    4$ & $    5$ & $    6$ & $    7$ & $    8$ & $    9$ & $   10$ & $   11$ \\ \hline
$a(n)$    & $   3 $ & $    5$ & $    7$ & $   10$ & $   12$ & $   16$ & $   31$ & $   35$ & $   47$ & $   48$ & $   63$ \\ \hline
$p_{a(n)}$& $   5 $ & $   11$ & $   17$ & $   29$ & $   37$ & $   53$ & $  127$ & $  149$ & $  211$ & $  223$ & $  307$ \\ \hline
\hline
$n$       & $   12$ & $   13$ & $   14$ & $   15$ & $   16$ & $   17$ & $   18$ & $   19$ & $   20$ & $   21$ & $   22$ \\ \hline
$a(n)$    & $   67$ & $  100$ & $  218$ & $  264$ & $  298$ & $  328$ & $  368$ & $  430$ & $  463$ & $  591$ & $  651$ \\ \hline
$p_{a(n)}$& $  331$ & $  541$ & $ 1361$ & $ 1693$ & $ 1973$ & $ 2203$ & $ 2503$ & $ 2999$ & $ 3299$ & $ 4327$ & $ 4861$ \\ \hline
\hline
$n$       & $   23$ & $   24$ & $   25$ & $   26$ & $   27$ & $   28$ & $   29$ & $   30$ & $   31$ & $   32$ & $   33$ \\ \hline
$a(n)$    & $  739$ & $  758$ & $  782$ & $  843$ & $  891$ & $  929$ & $ 1060$ & $ 1184$ & $ 1230$ & $ 1316$ & $ 1410$ \\ \hline
$p_{a(n)}$& $ 5623$ & $ 5779$ & $ 5981$ & $ 6521$ & $ 6947$ & $ 7283$ & $ 8501$ & $ 9587$ & $10007$ & $10831$ & $11777$ \\ \hline
\hline
$n$       & $   34$ & $   35$ & $   36$ & $   37$ & $   38$ & $   39$ & $   40$ & $   41$ & $   42$ & $   43$ & $   44$ \\ \hline
$a(n)$    & $ 1832$ & $ 2226$ & $ 3386$ & $ 3645$ & $ 3794$ & $ 3796$ & $ 4523$ & $ 4613$ & $ 4755$ & $ 5009$ & $ 5950$ \\ \hline
$p_{a(n)}$& $15727$ & $19661$ & $31469$ & $34123$ & $35671$ & $35729$ & $43391$ & $44351$ & $45943$ & $48731$ & $58889$ \\ \hline
\end{tabular}
\end{center}
\vspace{1mm}
If $1 \leq n \leq 43$ and
\begin{displaymath}
k \in \left[ \frac{p_{a(n+1)}}{p_{a(n+1)-1}}, \frac{p_{a(n)}}{p_{a(n)-1}} \right),
\end{displaymath}
then $R_1^{(k)} = p_{a(n)}$.
\end{enumerate}
\end{kor}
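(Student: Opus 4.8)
The plan is to reduce everything to Remark~(b), which already turns the problem into a finite, purely combinatorial statement about the consecutive gap ratios $p_n/p_{n-1}$. Concretely, Remark~(b) asserts that for every $k \in [1.0008968291, 5/3)$ one has $R_1^{(k)} = p_m$, where
\begin{displaymath}
m = \max \{ n \in \{2,\ldots,5950\} \mid p_n/p_{n-1} > k \}.
\end{displaymath}
Thus it suffices to understand how this maximal index $m$ jumps as $k$ increases: the values it takes and the thresholds at which it changes are exactly what the two families of half-open intervals in the corollary encode.

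First I would characterise which indices can occur as $m$. An index $j \in \{2,\ldots,5950\}$ is attained as $m$ for some admissible $k$ if and only if $p_j/p_{j-1} > p_i/p_{i-1}$ for every $i$ with $j < i \leq 5950$; that is, $j$ is a right-to-left record (a suffix maximum) of the ratio sequence $(p_i/p_{i-1})$. Listing these records in increasing order of index yields a sequence $a(1) < a(2) < \cdots < a(44)$ along which the associated ratios $p_{a(n)}/p_{a(n)-1}$ are \emph{strictly decreasing}, with $a(44) = 5950$, $p_{a(1)}/p_{a(1)-1} = 5/3$, and $p_{a(44)}/p_{a(44)-1} > 1.0008968291$. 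The content of the table is precisely that these $44$ records are the listed values; this is the computer verification, and it is the only nonroutine ingredient.

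Granting the table, part~(a) is immediate: for $k \in [1.0008968291, p_{5950}/p_{5949})$ the inequality $p_{5950}/p_{5949} > k$ places the largest admissible index $5950$ into the set, so $m = 5950$ and $R_1^{(k)} = p_{5950} = 58889$. For part~(b), fix $1 \leq n \leq 43$ and $k \in [p_{a(n+1)}/p_{a(n+1)-1}, p_{a(n)}/p_{a(n)-1})$; this interval lies inside $[1.0008968291, 5/3)$, so Remark~(b) applies. Since $k < p_{a(n)}/p_{a(n)-1}$, we get $m \geq a(n)$. Conversely, for any $j$ with $a(n) < j \leq 5950$ the record property gives $p_j/p_{j-1} \leq p_{a(n+1)}/p_{a(n+1)-1} \leq k$, because the largest gap ratio over $\{a(n)+1,\ldots,5950\}$ is attained at the leftmost record index in that range, namely $a(n+1)$; thus no such $j$ lies in the set and $m \leq a(n)$. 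Combining, $m = a(n)$ and $R_1^{(k)} = p_{a(n)}$.

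I expect the main obstacle to be the computer verification underlying the table. One must compute $p_i/p_{i-1}$ for all $i$ up to $5950$, extract the right-to-left records, and confirm both that exactly the listed $44$ indices arise and that the corresponding ratios are pairwise distinct, so that the half-open intervals in (a) and (b) tile $[1.0008968291, 5/3)$ without gaps or overlaps. Once the table is certified, the mathematical argument above is a short deduction from Remark~(b).
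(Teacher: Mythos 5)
Your proposal is correct and takes essentially the same route as the paper, whose entire proof is the appeal to Remark~(b) plus a computer check of the table. In fact, your writeup makes explicit the details the paper compresses into ``and a computer'' --- namely the suffix-record (right-to-left maximum) characterisation of the attainable indices $m$ and the strict decrease of the ratios $p_{a(n)}/p_{a(n)-1}$, which is exactly what makes the half-open intervals in (a) and (b) determine $m = a(n)$.
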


%
%
%
%

\end{document}